\begin{document}
\parskip=6pt

\theoremstyle{plain}
\newtheorem{prop}{Proposition}
\newtheorem{lem}[prop]{Lemma}
\newtheorem{thm}[prop]{Theorem}
\newtheorem{cor}[prop]{Corollary}
\newtheorem{defn}[prop]{Definition}
\theoremstyle{definition}
\newtheorem{example}[prop]{Example}
\theoremstyle{remark}
\newtheorem{remark}[prop]{Remark}
\numberwithin{prop}{section}
\numberwithin{equation}{section}
\def\cal{\mathcal}
\newcommand{\cF}{\cal F}
\newcommand{\cA}{\cal A}
\newcommand{\cC}{\cal C}
\newcommand{\cO}{{\cal O}}
\newcommand{\cE}{{\cal E}}
\newcommand{\cU}{{\cal U}}
\newcommand{\cM}{{\cal M}}
\newcommand{\cD}{{\cal D}}
\newcommand{\cK}{{\cal K}}
\newcommand{\cZ}{{\cal Z}}

\newcommand{\bC}{\mathbb C}
\newcommand{\bP}{\mathbb P}
\newcommand{\bN}{\mathbb N}
\newcommand{\bA}{\mathbb A}
\newcommand{\bR}{\mathbb R}
\newcommand{\oP}{\overline P}
\newcommand{\oQ}{\overline Q}
\newcommand{\oR}{\overline R}
\newcommand{\oS}{\overline S}
\newcommand{\oc}{\overline c}
\newcommand{\bp}{\mathbb p}
\newcommand{\oD}{\overline D}
\newcommand{\oE}{\overline E}
\newcommand{\oC}{\overline C}
\newcommand{\of}{\overline f}
\newcommand{\ou}{\overline u}
\newcommand{\ow}{\overline w}
\newcommand{\oy}{\overline y}
\newcommand{\oz}{\overline z}

\newcommand{\hg}{\hat G}
\newcommand{\hM}{\hat M}

\newcommand{\tpr}{\widetilde {\text{pr}}}
\newcommand{\tB}{\widetilde B}
\newcommand{\tx}{\widetilde x}
\newcommand{\ty}{\widetilde y}
\newcommand{\txi}{\widetilde \xi}
\newcommand{\teta}{\widetilde \eta}
\newcommand{\tna}{\widetilde \nabla}
\newcommand{\tth}{\widetilde \theta}

\newcommand{\var}{\varepsilon}
\newcommand{\vp}{\varphi}

\newcommand{\End}{\text{End }}
\newcommand{\loc}{\text{loc}}
\newcommand{\lam}{\lambda}
\newcommand{\Hom}{\text{Hom}}
\newcommand{\Ker}{\text{Ker}}
\newcommand{\dist}{\text{dist}}
\newcommand{\psl}{\rm{PSL}}
\newcommand{\rk}{\roman{rk }}
\newcommand{\id}{\text{id }}
\renewcommand\qed{ }
\begin{titlepage}
\title{On Riemannian submersions}
\author{L\'aszl\'o Lempert \thanks{Research partially  supported by NSF grant DMS1764167.  The paper was written at the R\'enyi Institute and the E\"otv\"os University, both of Budapest, while I was on sabbatical leave from Purdue University. I am grateful to all three institutions.
\newline
2010 Mathematics Subject Classification 53B20, 53C22, 32V05}\\ Department of  Mathematics\\
Purdue University\\West Lafayette, IN
47907-2067, USA}
\thispagestyle{empty}
\end{titlepage}
\date{}
\maketitle
\abstract
We prove that the image of a real analytic Riemannian manifold under a smooth Riemannian submersion is necessarily real analytic.
\endabstract

\section{Introduction}

A $C^1$ map $\vp$ between Riemannian manifolds $(X,h)$ and $(Y,k)$ is a Riemannian submersion if for every $x\in X$ the differential $\vp_*$ induces the metric of $T_{\vp(x)}Y$ from the metric of $T_xX$. In other words, $\vp_*$ maps the orthogonal complement of Ker$\,\vp_*$ isometrically on $TY$. Our main theorem is the following regularity result.

\begin{thm}  
Suppose $(X,h)$ is a real analytic and $(Y,k)$ is a $C^\infty$ Riemannian manifold, both finite dimensional. If 
$\vp:(X,h)\to(Y,k)$ is a surjective $C^\infty$ Riemannian submersion, then $Y$ admits a real analytic structure in which $k$ is real analytic.
\end{thm}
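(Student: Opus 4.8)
The plan is to build a real-analytic atlas on $Y$ out of the analytic exponential map of $X$, and then to check separately that the transition maps and the metric $k$ are real-analytic. The starting point is the classical fact that for a $C^\infty$ Riemannian submersion a geodesic of $X$ which is horizontal at one point stays horizontal and projects to a geodesic of $Y$ of the same speed. Fixing $x\in X$, writing $H_x=(\Ker\vp_*)^\perp$ for the horizontal space and $y=\vp(x)$, this yields
\begin{equation*}
\vp\circ\exp^X_x\big|_{H_x}=\exp^Y_{y}\circ(\vp_*|_{H_x}),
\end{equation*}
where $\vp_*|_{H_x}\colon H_x\to T_yY$ is a linear isometry. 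Since $\exp^X$ is analytic (the geodesic equation of the analytic metric $h$ has analytic coefficients), the left-hand map $\psi_x:=\vp\circ\exp^X_x|_{H_x}$ is a smooth local diffeomorphism from a neighborhood of $0\in H_x$ onto a neighborhood of $y$, and as $x$ ranges over $X$ these maps cover $Y$. I would take $\{\psi_x^{-1}\}$ as the candidate charts; note that each $\psi_x$ is, up to the linear isometry $\vp_*|_{H_x}$, a geodesic normal chart of $(Y,k)$ centered at $y$.

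The key reduction is that everything comes down to a single analytic statement: the pulled-back metric $g_x:=\psi_x^{*}k$ is real-analytic on its domain in $H_x$. Granting this, the candidate charts are mutually compatible, for on an overlap the transition $\tau=\psi_{x'}^{-1}\circ\psi_x$ satisfies $\tau^{*}g_{x'}=g_x$, so $\tau$ is a smooth isometry between the real-analytic metrics $g_x$ and $g_{x'}$. Any such isometry is automatically real-analytic, because it intertwines the exponential maps, $\tau\circ\exp^{g_x}_p=\exp^{g_{x'}}_{\tau(p)}\circ d\tau_p$, and for an analytic metric the exponential map is an analytic local diffeomorphism; hence locally $\tau=\exp^{g_{x'}}_{\tau(p)}\circ d\tau_p\circ(\exp^{g_x}_p)^{-1}$ is a composition of analytic maps. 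Thus $\{\psi_x^{-1}\}$ is a real-analytic atlas and $k$ is real-analytic in it, so the theorem reduces to analyticity of $g_x$.

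To attack this main step I would compute $g_x$ through Jacobi fields. Writing $\gamma_v(t)=\exp^X_x(tv)$ for the horizontal geodesic with $\gamma_v(0)=x$, $v\in H_x$, and fixing a basis $e_i$ of $H_x$, the fields $\partial_{v^i}\exp^X_x$ are values of Jacobi fields $J_i$ along $\gamma_v$, and their projections $\bar J_i=\vp_*J_i$ are the Jacobi fields along $\bar\gamma_v=\vp\circ\gamma_v$ with $\bar J_i(0)=0$, $\bar J_i'(0)=\vp_* e_i$. Consequently $(g_x)_{ij}(tv)=k(\bar J_i(t),\bar J_j(t))$, i.e.\ in a parallel orthonormal frame $g_x(tv)=\mathcal A(t)^{\top}\mathcal A(t)$, where $\mathcal A''+\mathcal R^Y\mathcal A=0$ and $\mathcal R^Y$ is the curvature operator of $Y$ along $\bar\gamma_v$. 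By O'Neill's curvature formula, $\mathcal R^Y$ along $\bar\gamma_v$ equals the curvature $\mathcal R^X$ of $X$ along $\gamma_v$ — real-analytic in $(t,v)$, being built from the analytic tensor $R^h$ and the analytic curve $\gamma_v$ — plus a universal quadratic expression in the O'Neill integrability tensor $A$ restricted to $\gamma_v$. Hence $g_x$ is real-analytic as soon as the horizontal/vertical splitting, equivalently $A$, is real-analytic along $\gamma_v$ and depends analytically on $v$. The scheme I would pursue is to show that the splitting and the tensors $A,T$ obey, along each horizontal geodesic, a closed first-order (Riccati-type) system whose coefficients are universal polynomials in $A,T$ and in $R^h$ restricted to $\gamma_v$; analytic dependence of solutions of an analytic ODE on the time $t$ and on the parameter $v$, with initial data frozen at the single point $x$, would then give analyticity of $g_x$.

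I expect the closing-up of this system to be the main obstacle, and the place where the hypotheses must be used most carefully. One cannot hope to prove it by showing that $H$ itself is real-analytic: already for $X$ a flat planar domain and $\vp$ the signed distance to a smooth but non-analytic curve one obtains a smooth Riemannian submersion onto an interval whose fibers and horizontal distribution are genuinely non-analytic, while $Y$ is trivially analytic. Thus the analyticity of $g_x$ must be extracted from the precise Gram-matrix-of-Jacobi-fields structure together with O'Neill's identity, not from regularity of $H$. The delicate issue is whether the evolution of $A$ and $T$ along a single horizontal geodesic really closes without transverse derivatives of these only-smooth tensors intruding; I would try to secure this by complexifying the analytic data on $X$ — the metric, connection, curvature and geodesic flow all extend holomorphically to a complexification — and by tracking which of the quantities entering the structure equations admit holomorphic extensions, thereby forcing $g_x$ to extend holomorphically and hence to be real-analytic.
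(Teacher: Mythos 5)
Your reduction steps are sound: the charts $\psi_x^{-1}$, the Myers--Steenrod-type argument that a $C^\infty$ isometry between analytic metrics is analytic, and the Jacobi-field expression for $g_x$ (up to the harmless $1/t^2$ normalization of the $J_i$) correctly reduce the theorem to joint analyticity in $(t,v)$ of the curvature operator $\mathcal{R}^Y$ along $\bar\gamma_v$ in a parallel frame. But the central step --- that this follows because $A$ (with $T$) satisfies a closed analytic ODE system along each horizontal geodesic --- is exactly what you do not prove, and you correctly identify it as the obstacle. O'Neill's structure equations do not close along a single geodesic: the identities controlling $\nabla_{\dot\gamma}A$ and $\nabla_{\dot\gamma}T$ also involve covariant derivatives of $A$ and $T$ in vertical directions, i.e.\ transverse derivatives of tensors that are only $C^\infty$, over which you have no analytic control. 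Nor does the closing complexification remark rescue this: only objects defined by the analytic structure of $(X,h)$ --- metric, connection, curvature, geodesic flow --- extend holomorphically, whereas $H$, $A$, $T$ are manufactured from the merely smooth $\vp$ and so there is nothing to continue; your own flat example shows these objects are genuinely non-analytic. Worse, the quantity you actually need, the Gram data $\langle A_{\dot\gamma_v}E_i,A_{\dot\gamma_v}E_j\rangle$ jointly analytic in $(t,v)$, is by O'Neill's formula equivalent to analyticity of $\mathcal{R}^Y$ along projected geodesics, which is essentially the conclusion to be proved; as it stands, the proposal reformulates the theorem rather than establishing it.

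For comparison, the paper meets the same descent/closure problem at one precise point and resolves it by complex-analytic rigidity instead of an ODE. It works on the manifold $M_r$ of geodesics $x:[-r,r]\to X$: analyticity of $(X,h)$ yields an adapted complex structure $P$ near $X\subset M_r$ (Theorem 2.4, after Sz\H{o}ke and Guillemin--Stenzel); $P$ restricts to a CR structure $R$ on the submanifold $H_r$ of horizontal geodesics; and the crux of Theorem 3.1 is to show that $\psi_*R_x$ depends only on $\psi(x)$, i.e.\ that the structure descends to $N_r$. This is done by showing the coefficient matrix $F_{\alpha\beta}(g)$ of (3.6) is meromorphic in the complexified time parameter $g\in\mathrm{int}\,G_r$ and is already determined by its restriction to the real translations $\Gamma$, where it is computable from $\psi_*$-data alone; uniqueness of meromorphic continuation plays exactly the role your hoped-for closed ODE system was meant to play. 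Analyticity of $(Y,k)$ is then extracted not from normal-coordinate Gram matrices but from the converse theorem that an adapted complex structure forces analyticity (Theorem 2.5, after [L1]). If you wish to salvage your route, the missing ingredient is precisely such a rigidity statement for the restriction of the complexified geodesic data to horizontal geodesics; I do not see a purely real-variable mechanism that supplies it.
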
  

Henceforward we will drop ``real'' from real analytic.---An amusing aspect of the theorem is that $\vp$ itself need not be 
analytic. Let $(X,h)$ be an analytic Riemannian manifold, and $S\subset X$ a smooth hypersurface that cuts $X$ in two. 
Then $\varphi(x)=\pm\text{dist}\,(x,S)$, $x\in X$, in a neighborhood of $S$  satisfies $|d\vp|\equiv 1$, and so defines a Riemannian submersion to $\bR$, which fails to be analytic unless $S$ is.

In Theorem 4.1 we will prove a more precise result, with $(Y,k)$ only $C^p$. The proof will use so called adapted complex structures. These structures first appeared in \cite{LSz1} as complex manifold structures on the tangent bundle of  Riemannian manifolds $(X,h)$ that are in a sense compatible with the metric. Simultaneously, Guillemin and Stenzel in \cite{GS} discovered an equivalent complex structure, but on the cotangent bundle. Guillemin--Stenzel and Sz\H{o}ke in \cite{Sz1, Sz3} prove that if $(X,h)$ is analytic, then on some neighborhood of the zero section in $T^*X$, resp. $TX$, an adapted complex structure exists; and this author in \cite{L1} proved the converse. Accordingly, we will derive Theorem 1.1 from a result that connects the existence of adapted complex structures for a Riemannian manifold and for its image under a Riemannian submersion. 

It turns out that it is better to approach this latter result from a perspective somewhat different from \cite{GS, LSz1}, and view adapted complex structures not on $TX$ or on $T^*X$, but on the related manifold $M_r$ of geodesics $x:[-r,r]\to X$, as suggested in \cite{LSz2}. Here $r$ is a positive number. For one thing, a Riemannian submersion $\vp:X\to Y$ induces a map $\vp_*:TX\to TY$, but $\vp_*$ cannot immediately be employed to transfer adapted complex structures from $TX$ to 
$TY$. Working with manifolds $M_r, N_r$ of geodesics in $X, Y$ though, it becomes obvious that $\vp$ does induce a 
submersion $\psi$ from a submanifold $H_r\subset M_r$ to $N_r$; namely from the submanifold of geodesics in $X$ 
perpendicular to the fibers of $\vp$. Now $H_r$ is not a complex submanifold of $M_r$. However, an adapted complex structure of $M_r$, wherever it exists, restricts to $H_r$ as a CR structure. This suggests that we enlarge the notion of adapted complex structures, and develop a quotient construction in this more general framework, that ultimately yields an adapted complex structure on $N^r$. We have already written in \cite{L2} on the generalization in question, a variant of which we will review in the next section. In Section 3 we discuss how an adapted complex structure associated with $(X,h)$ induces an adapted complex structure associated with its image $(Y,k)$ under a Riemannian submersion, and in Section 4 we prove Theorem 1.1 in a more precise form. 

\section{Adapted involutive structures}

Here we will review the notion of involutive structures adapted to the action of a certain monoid, a special instance of the notion studied in \cite{L2}. In one aspect we will be more general, though: while \cite{L2} worked with $C^\infty$ and more regular objects, in this paper we will work with $C^p$ H\"older classes, $p\in[0,\infty]$. At the same time, unlike in \cite{L2}, all our manifolds will be finite dimensional. If $E$ is a vector bundle, we abbreviate  $\bC\otimes_\bR E$ as $\bC E$. 

\begin{defn}  
Let $p\ge 1$ and $M$ a manifold of class $C^{p+1}$. An involutive structure of class $C^p$ on $M$ is a subbundle 
$P\subset\bC TM$ of class $C^p$ such that the Lie bracket of its $C^1$ sections is again a section. (Such a bundle itself
is also called involutive.)  A $C^1$ map $f:(M,P)\to (N,Q)$ of involutive manifolds is involutive if $f_*P\subset Q$.
\end{defn}   

For example, if $P\oplus\oP=\bC TM$, $P$ is an integrable almost complex structure. According to Hill  and Taylor 
\cite{HT}, when $p>1$ is not an integer, one can find $\bC$--valued local coordinates $z_1,\dots,z_m$ on $M$, of class 
$C^{p+1}$, such that $dz_j|\oP=0$. The case $p=\infty$ was first proved in \cite{NN}. The local coordinates turn $M$ into a 
complex manifold, whose bundle of $(1,0)$ vectors is $T^{10}M=P$. More generally, when $P\cap \oP=(0)$, $P$ defines a 
CR structure, that may or may not be embeddable in a complex manifold. 

While Definition 2.1 is made for manifolds $M$ without boundary, it makes sense even for manifolds with corners.  We will 
need involutive structures on one type of manifolds with corners: Fix $r\in (0,\infty)$ and consider affine maps 
$\bR\ni t\mapsto a+bt\in\bR$ that map $[-r,r]$ into itself. These maps form a monoid $G_r$, with the operation composition 
of maps. Let $e\in G_r$ denote the neutral element.
The coordinates $a,b$ identify $G_r$ with 
\[
\{a+ib: a,b\in\bR,\, |a|+|b|r\le r\},
\]
a diamond in $\bC$. We endow $G_r$ with the complex structure $S=T^{10}\bC |G_r$. 

\begin{defn}   
Let $M$ be a manifold of class $C^p$. A right action of $G_r$ on $M$, of class $C^p$, is a $C^p$ map
\[M\times G_r\ni (x,g)\mapsto xg\in M\]
such that $(xg)h=x(gh)$ and $xe=x$.
\end{defn}  

\begin{defn}   
Given a $C^1$ right action of $G_r$ on $M$, an involutive structure $P$ on $M$ is adapted (to $G_r$) if for every $x\in M$
\[G_r\ni g\mapsto xg\in M\]
is an involutive map between $({\rm{int}}\,G_r, S)$ and $(M,P)$.
\end{defn}   

Consider next a manifold $X$ of class $C^{p+1}$, $p\ge 2$, endowed with a Riemannian metric $h$ of class $C^p$. Given $r\in (0,\infty)$, let $M_r$ be the set of geodesics $x:[-r,r]\to X$ of $h$. For any $t\in[-r,r]$
\[M_r\ni x\mapsto \dot x(t)\in TX\]
is a bijection on an open subset of $TX$, 
and endows $M_r$ with a $C^p$ manifold structure, that is independent of the choice of $t$. If 
$(X,h)$ was analytic, so will be $M_r$.
In \cite{LSz1, LSz2} we focused on complete $(X,h)$ and the manifold $M_\infty$ of all geodesics $x:\bR\to X$. 
Of course, then all $M_r$ are the same, in the sense that the map $M_\infty\ni x\mapsto x|[-r, r]\in M_r$ is a diffeomorphism. 
In general, we embed $X$ into $M_r$ by associating  with $u\in X$ the constant geodesic $x\equiv u$, and view 
$X\subset M_r$ 
as a $C^p$ submanifold.

The monoid $G_r$ acts on $M_r$ on the right:
\begin{equation}
M_r\times G_r\ni (x,g)\mapsto xg=x\circ g\in M_r,
\end{equation}
and so on $G_r$ invariant submanifolds of $M_r$ we can talk about complex or involutive structures  that are adapted to 
$G_r$. The connection with adapted complex structures in the sense of \cite{LSz1, Sz3} is as follows. Embed $M_r$ into 
$TX$ by $x\mapsto \dot x(0)$. The action of $g(t)=a+bt$ on $M_r$ corresponds to following the geodesic flow on $TX$ for 
time $a$ and then fiberwise dilating by a factor of $b$. Note that the geodesic flow, as the flow of a $C^{p-1}$ vector field, is
$C^{p-1}$. The dilation action is also, hence the corresponding action (2.1) is $C^{p-1}$.
When $(X,h)$ is complete and $U$ is a neighborhood of the zero 
section in $TX=M_\infty$, \cite[Definition 4.1]{LSz1} defined a complex structure on $U$ adapted if the image $ug$ of any 
$u\in U$ under this map depends holomorphically on $a+bi$, and \cite{Sz3} extended this to geodesics of a not necessarily 
complete connection. This implies that under the embedding $M_r\hookrightarrow TX$ adapted complex structures in the 
sense of \cite{LSz1,Sz3} and of Definition 2.3 correspond.

The following is a special case of a result of Sz\H oke, \cite[Theorem 0.3]{Sz3}. It can also be derived from \cite{GS}.

\begin{thm}  
If $(X,h)$ is an analytic Riemannian manifold and $r\in (0,\infty)$, any constant geodesic $x\equiv u$ in $X$ admits a $G_r$ invariant neighborhood in $M_r$ and an analytic complex structure $P$ on it that is adapted to $G_r$.
\end{thm}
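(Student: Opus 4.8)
The plan is to reduce the statement to the known existence theorem for adapted complex structures on a neighborhood of the zero section in $TX$. Since $(X,h)$ is analytic, the manifold $M_r$ of geodesics $x:[-r,r]\to X$ is itself analytic, and the $G_r$-action $(x,g)\mapsto x\circ g$ of (2.1) is analytic as well: it corresponds, under the embedding $M_r\hookrightarrow TX$, $x\mapsto\dot x(0)$, to composing the geodesic flow (for time $a$) with fiberwise dilation (by $b$), both of which are analytic when the metric is. So the task is to produce, near a given constant geodesic $x\equiv u$, a $G_r$-invariant open set together with an adapted analytic complex structure, and then to check that Definition 2.3 matches the holomorphicity condition of \cite{LSz1, Sz3}.

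First I would recall the content of the cited results of Guillemin--Stenzel \cite{GS} and Sz\H oke \cite{Sz3}: for an analytic $(X,h)$ there is, on some neighborhood $U$ of the zero section in $TX$ (equivalently in $T^*X$), an adapted complex structure in the sense of \cite[Definition 4.1]{LSz1}, meaning that for each $v\in U$ the assignment $a+bi\mapsto vg$, where $g(t)=a+bt$, is holomorphic into $U$. Being a constant geodesic, $x\equiv u$ sits at the zero section, so $u\in U$; the analyticity is a consequence of the analytic continuation of the exponential map that underlies these constructions. This gives a complex structure $P=T^{10}U$ on a neighborhood of $u$ in $TX\cong M_r$.

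The next step is to arrange $G_r$-invariance of the domain. The neighborhood $U$ furnished by \cite{Sz3} need not be invariant under the $G_r$-action, so I would shrink it: starting from $U$ and the constant geodesic $u$, I would choose a smaller $G_r$-invariant neighborhood $V\subset U$ of $u$. Because $e\in G_r$ acts trivially and the action is continuous, and because the diamond $G_r$ is compact, the orbit of a sufficiently small neighborhood of $u$ stays inside $U$; concretely, I would take $V$ to be the union of $G_r$-orbits of points in a small ball around $u$, or equivalently the largest $G_r$-saturated subset of $U$ containing $u$. Restricting $P$ to $V$ yields an analytic adapted complex structure on a $G_r$-invariant neighborhood of $x\equiv u$.

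Finally I would verify that the structure so obtained is adapted in the precise sense of Definition 2.3, i.e. that for each fixed $x\in V$ the map $G_r\ni g\mapsto xg$ is involutive from $(\mathrm{int}\,G_r, S)$ to $(M_r, P)$, where $S=T^{10}\bC|G_r$. This is exactly the translation, made explicit in the paragraph preceding the theorem in the excerpt, of the holomorphicity condition ``$ug$ depends holomorphically on $a+bi$'': saying that $g\mapsto xg$ is holomorphic with respect to the diamond's complex structure $S$ and the complex structure $P$ on $M_r$ is the same as saying $(g\mapsto xg)_*S\subset P$. I expect the main obstacle to be precisely this bookkeeping of definitions—confirming that ``adapted'' in the sense of \cite{LSz1,Sz3} coincides with Definition 2.3 under the identification $M_r\hookrightarrow TX$—together with the care needed to keep the shrunken domain $V$ both open and genuinely $G_r$-invariant while still containing the prescribed constant geodesic.
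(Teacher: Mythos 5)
Your proposal is correct and matches the paper's treatment: the paper offers no independent proof, presenting the theorem as a special case of Sz\H oke's \cite[Theorem 0.3]{Sz3} (alternatively derivable from \cite{GS}), with the identification of Definition 2.3 with the holomorphicity condition of \cite{LSz1, Sz3} under the embedding $M_r\hookrightarrow TX$ already made in the discussion preceding the theorem, so your reduction is exactly the intended one. One small caveat: your two candidate domains are not equivalent---the saturation $BG_r$ of a small ball need not be open (constant maps $g\in G_r$ collapse $B$ into $X\subset M_r$), whereas the largest $G_r$-invariant subset $\{x\in U: xG_r\subset U\}$ is open by compactness of $G_r$ and the tube lemma, and it contains $u$ since $uG_r=\{u\}$, so your invariance step goes through with that choice.
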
   

Analyticity is not only sufficient but necessary for the existence of adapted complex structures: 

\begin{thm}   
Suppose $p>3$ is not an integer and $Y$ is a manifold of class $C^{p+1}$ endowed with a Riemannian metric $k$ of class $C^p$. Given $r\in(0,\infty)$, let $N_r$ stand for the $C^p$ manifold of geodesics $y:[-r,r]\to Y$, into which $Y$ is embedded as the submanifold of constant geodesics. If a $G_r$ invariant neighborhood of $Y\subset N_r$ admits an adapted complex structure $P$ of class $C^{p-1}$, then there is an analytic manifold structure on $Y$ in which $k$ is analytic, and whose underlying $C^p$ manifold is the same as that of the initial $C^{p+1}$ manifold. The same holds if $p\ge3$ is an integer, with the sole difference that instead of the underlying $C^p$ manifolds only the agreement of the underlying $C^{p-\var}$ manifold structures can be guaranteed, with arbitrary $\var>0$. 
\end{thm}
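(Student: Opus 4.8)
The plan is to work locally near a point $v_0\in Y$ and to extract, from the adapted complex structure alone, an analytic chart in which $k$ is analytic. First I would fix a $G_r$-invariant neighborhood $V$ of $Y$ carrying $P$ and realize it, via $y\mapsto\dot y(0)$, as a neighborhood of the zero section in $TY$, so that the $G_r$-action becomes geodesic flow composed with fiber dilation and $Y$ becomes the zero section. Since $P$ is a genuine complex structure, $P\oplus\oP=\bC TV$, of class $C^{p-1}$ with $p-1\notin\bN$, the Hill--Taylor theorem quoted after Definition 2.1 furnishes $\bC$-valued holomorphic coordinates $z=(z_1,\dots,z_n)$ of class $C^p$; this is exactly where the hypothesis $p\notin\bN$ enters, and where for integer $p$ one obtains only coordinates of class $C^{p-\var}$, accounting for the weaker conclusion in that case. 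A dimension count gives $\dim_{\bR}Y=n=\dim_{\bC}V$, and the standard fact that an adapted complex structure interchanges horizontal and vertical subspaces along the zero section shows $Y$ is totally real of maximal dimension, so that $x:=\mathrm{Re}\,z|_Y$ is a $C^p$ chart on $Y$. The task then reduces to proving $k$ analytic in $x$ and checking that the charts $x$ fit into a consistent analytic atlas with the correct underlying smooth structure.

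The key input is the family of holomorphic curves swept out by the $G_r$-orbits. For each $y$ the orbit map $\beta_y\colon g\mapsto yg$ is, by adaptedness (Definition 2.3), holomorphic on the interior of the diamond $G_r$; in the $TY$-picture its image is the leaf $\{\,b\,\dot\gamma(a)\,\}$ attached to the geodesic $\gamma$ of $y$. I would exploit the geometry of the diamond $\{|a|+|b|r\le r\}$: its horizontal diagonal $\{b=0,\ |a|<r\}$ lies in the \emph{interior}, and $\beta_y$ sends it into $Y$, since an affine map with $b=0$ collapses $[-r,r]$ to a point and so carries $y$ to a constant geodesic. Consequently $a\mapsto z(\beta_y(a))=x(\gamma(a))$ is the restriction to a real segment of the $\bC^n$-valued map $w\mapsto z(\beta_y(w))$, which is holomorphic on a full two-dimensional neighborhood of that segment. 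Hence \emph{every geodesic, read in the chart $x$, is real-analytic in its affine parameter}; reading $\beta_y$ along the imaginary axis shows in the same way that $z$ is real-analytic along the fiber rays $b\mapsto b\xi$.

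The heart of the matter, and where I expect the main obstacle, is to upgrade these one- and two-dimensional analyticities to joint real-analyticity of the exponential map $(v,\xi,a)\mapsto\exp_v(a\xi)$ in the chart $x$. Equivalently, writing $F(\zeta,w)=z(\beta_y(w))$ with $\zeta=z(y)$, one must show that $F$, which is holomorphic in $w$ but a priori only $C^{p-1}$ in $\zeta$, is jointly real-analytic. The monoid cocycle relation $F\bigl(F(\zeta_*,w_*),w\bigr)=F\bigl(\zeta_*,\ \mathrm{Re}\,w_*+(\mathrm{Im}\,w_*)\,w\bigr)$ shows that $F(\cdot,w)$ is real-analytic along each leaf of the Monge--Ampère foliation of $V\setminus Y$, and these leaves are holomorphic discs attached to the totally real $Y$ along the geodesics. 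I would combine this leafwise analyticity with the analyticity in $w$ and with the attachment to $Y$ in a reflection argument — continuing each disc across its real boundary and invoking a separate-analyticity principle of Forelli--Siciak type, whereby analyticity along a sufficiently rich family of curves together with the available global smoothness forces joint analyticity — to conclude that $\exp$, hence the geodesic spray, is real-analytic in $x$. The delicate regularity bookkeeping in this step is precisely what separates the non-integer case (no loss, so agreement of the underlying $C^p$ structures) from the integer case (an $\var$-loss).

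Once the spray is analytic the rest is routine. Evaluating $\partial_a^2\exp_v(a\xi)\big|_{a=0}$ recovers $-\Gamma^k_{ij}(v)\xi^i\xi^j$ as an analytic function of $(v,\xi)$, so polarization makes the Christoffel symbols analytic in $x$; then metric compatibility $\partial_l g_{ij}=g_{im}\Gamma^m_{lj}+g_{jm}\Gamma^m_{li}$ is a linear system with analytic coefficients whose solution with the prescribed value $g(v_0)$ is analytic, whence $k$ is analytic in $x$. Finally I would verify that charts $x$ built at different points are analytically compatible — their transition maps are restrictions to $Y$ of the $C^p$ biholomorphisms between the corresponding $z$-coordinates, which the previous step renders analytic on $Y$ — and that the resulting analytic structure has the same underlying $C^p$ (respectively $C^{p-\var}$) manifold as the given one, since each $x$ is a $C^p$ (respectively $C^{p-\var}$) chart. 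This produces the asserted analytic structure in which $k$ is analytic.
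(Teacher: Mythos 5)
Your opening step coincides with the paper's: apply Hill--Taylor to the $C^{p-1}$ structure $P$ to get holomorphic coordinates of class $C^p$ (resp.\ $C^{p-\var}$ for integer $p$), which is indeed where the noninteger hypothesis enters. After that the proposal diverges from the paper and runs into a genuine gap at exactly the point you flag as ``the heart of the matter.'' You propose to upgrade holomorphy of the orbit maps $g\mapsto yg$ (equivalently, analyticity of each geodesic in its affine parameter, plus leafwise analyticity from the cocycle relation) to joint analyticity of the exponential map by ``a separate-analyticity principle of Forelli--Siciak type.'' No such principle applies here. Forelli-type theorems require holomorphy along all complex \emph{lines} through a point together with $C^\infty$ (in fact infinite-jet) regularity at that point; here the curves are the Monge--Amp\`ere leaves, a special family of attached discs rather than lines, and the total regularity available is only $C^{p-1}$ with $p$ possibly as small as just above $3$ --- so the hypothesis of every such theorem fails, and with finite smoothness these separate-analyticity statements are false in general. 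Moreover, any Bernstein/Siciak-style route would require \emph{uniform} control of the size of the domains of holomorphic extension as the geodesic varies, and your sketch establishes no uniformity at all; curvewise holomorphy on neighborhoods that may shrink as $(v,\xi)$ varies does not give joint analyticity. Uniformity is precisely where the paper does its real work: it never proves the exponential map analytic directly, but instead holomorphically extends the complexified quadratic form $q$ on $T^{10}U|Y$ using the $n\times n$ meromorphic matrices $f$ attached to geodesics, exploiting $\pm\,{\rm Im}\,f>0$ on $D^\pm$ (the generalization of \cite[Lemma 6.7]{LSz1}) and, via \cite[Proposition 3.1]{L1} and the conjugate-point bound $2\sigma$, a neighborhood of $0\in D$, \emph{independent of the geodesic}, on which $\det f'\ne 0$. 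That uniform zero-free region is the quantitative substitute for the qualitative principle you invoke, and nothing in your proposal replaces it.

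A secondary but real defect: you obtain the analytic structure on $Y$ from the charts $x={\rm Re}\,z|_Y$, justified only by $Y$ being maximally totally real. A totally real $C^p$ submanifold need not be an analytic submanifold, so the transition maps ${\rm Re}\,\Phi(x+iu(x))$ (with $Y$ locally the graph $z=x+iu(x)$) are analytic only if $u$ is --- which is what needs proving; your closing paragraph, which says the transitions ``are rendered analytic by the previous step,'' is circular as stated. The paper's device is different and essential: the geodesic reversal $y\mapsto y^*$, $y^*(t)=y(-t)$, is antiholomorphic (as in \cite[Theorem 5.7]{LSz1}), and $Y$ is its fixed-point set, hence an analytic submanifold of $U$; the analytic structure and its agreement with the underlying $C^p$ (resp.\ $C^{p-\var}$) structure then come for free. (Your chart-compatibility step could in principle be repaired by a Myers--Steenrod regularity argument once $k$ is known analytic in each chart, but that presupposes filling the main gap above.) Your final reconstruction of the Christoffel symbols and of $k$ from an analytic spray is fine, but it sits on the two unproven steps.
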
    

\begin{proof}
When $(Y,k)$ is compact and $p=\infty$, we proved this in \cite{L1}, see Theorem 1.5 there. We indicate the modifications 
needed for the more general and precise statement. Let $U\subset N_r$ be a neighborhood of $Y$ on which the adapted complex structure $P$ exists. According to Hill and Taylor \cite{HT} locally on $U$ there are $C^p$ (or $C^{p-\var}$ if $p$ is an integer) functions $z_1,\dots,z_n$ that serve as holomorphic coordinates $(d\overline z_j | P=0)$. This endows $U$ with the structure of a complex manifold whose underlying $C^p$ (resp. $C^{p-\var}$) manifold agrees with that of the original 
structure of $U\subset TY$. Let $y^*(t)=y(-t)$. As in \cite[Theorem 5.7]{LSz1} it follows that 
$y\mapsto y^*$ is an antiholomorphic map in a neighborhood of $Y$. Hence $Y$, its fixed point set, 
is an analytic submanifold of the complex manifold $U$. What remains is to prove that  $k$ is analytic in the inherited 
analytic structure of $Y$. \cite{L1} does this by passing from the real quadratic form $k$ on $TY$ to the induced complex 
quadratic form $q$ on $T^{10}U | Y$ and producing a holomorphic extension of $q$ to $T^{10}U_1$, with $U_1\subset U$ 
a neighborhood of $Y$.

The extension is constructed via certain $n$ by $n$ matrices $f=(f_{\alpha\beta})$ of meromorphic functions associated with geodesics $y\in N_r$. In \cite{L1} the geodesics extend to complete geodesics $\bR\to Y$ and the $f_{\alpha\beta}$ are meromorphic on strips in $\bC$ about $\bR$, of width inversely proportional to the speed of the geodesic. In the case at hand all we can say is that if we stay in a sufficiently small neighborhood $V\subset U$ of a fixed constant geodesic $y_0\in Y\subset N_r$, any $y\in V$ will extend to a geodesic on an arbitrarily large interval $[-R,R]$. By choosing $V$ suitably we can arrange that the $f_{\alpha\beta}$ associated with $y\in V$ are meromorphic on the disc 
$D=\{s\in\bC:|s|<1\}$, with poles only on $\bR\cap D$. \cite[Lemma 6.7]{LSz1} generalizes to the current setting, and gives 
that $\pm \text{Im} \,f$ is positive definite on $D^\pm=\{s\in D: \pm \text{Im}\, s>0\}$. (An alternative proof is in \cite{Sz2}, cf. 
(4.5) there.) Further, $f(0)=0$.

To construct the holomorphic extension of $q$ to $T^{10}V_1$ with some neighborhood $V_1\subset V$ of $y_0$ we need a neighborhood of $0\in D$ free of zeros of $\det f'$, cf. Lemma 3.2 in \cite{L1}. Fix $\sigma\in(0,1)$ so that conjugate points along any $y\in V$ are at distance $>2\sigma$. As in \cite{L1}, this implies that the pole of $f^{-1}$ closest to the pole at 0, is at distance $>2\sigma$. Instead of finding a zero free region about 0 we will find one about $\sigma$. In view of the transformation formula (2.5) and Lemma 2.4 of \cite{L1}, this neighborhood can be transformed back to 0.

Now $f'(s)$ is invertible if $s\in(0,2\sigma)$ is not a pole of $f$ by the same argument as in \cite[Lemma 3.2]{L1}. For $s\in D^+$ it is easier to study the invertibility of $(f^{-1}){'}(s)$. One pushes $f^{-1}$ forward to a holomorphic matrix function $F$ on the upper half plane $\bC^+$ by a fixed conformal map $\vp:D^+\to \bC^+$. \cite[Proposition 3.1]{L1} guarantees a neighborhood of $\vp(\sigma)$ in $\overline{\bC^+}$ where $F'$ is invertible, hence a neighborhood of $\sigma$ in $\overline{D^+}$ where $(f^{-1}){'}$ is invertible. The neighborhood  is independent of $y\in V$. Replacing $D^+$ by $D^-$ we in fact obtain a neighborhood of $\sigma\in D$, independent of $y\in V$, where $0\ne \det(f^{-1})'=\det f'/(\det f)^2$. The upshot is that on a fixed neighborhood of $0\in D$ the matrices $f$ associated with geodesics $y\in V$ have det$\,f'\ne 0$ away from the poles of $f$. Once this granted, the rest of the proof in \cite{L1} goes through, and provides the holomorphic extension of $q$, whence the analyticity of $h$.
\end{proof}

\section{Riemannian submersions}

Now we come to the main point of the paper. Let $p\in[2,\infty]$, $X$ and $Y$ $C^{p+1}$ manifolds endowed with $C^p$
 metrics $h,k$ and $\vp:(X,h)\to(Y,k)$ a surjective Riemannian submersion of class $C^{p+1}$. Given $r\in(0,\infty)$, let 
 $M_r, N_r$ denote the $C^p$ manifolds of geodesics $x:[-r,r]\to X$, resp. $y:[-r,r]\to Y$, into which we embed $X, Y$ as 
 before. There is no natural map between $M_r$ and $N_r$ that $\vp$ induces. However, there is a submanifold 
 $H_r\subset M_r$ consisting of geodesics that are perpendicular to the fibers of $\vp$ (`horizontal geodesics'). According 
 to O'Neill, composition with $\vp$ induces a map $\psi:H_r\to N_r$, see [O, Chapter 7, Lemma 45]. In the identifications
\begin{equation}   
M_r\ni x\mapsto \dot x(0)\in TX, \qquad N_r\ni y\mapsto \dot y(0)\in TY,
\end{equation}   
$H_r$ corresponds to the orthogonal complement
\[H=(\Ker\, \vp_*)^\perp\subset TX,\]
intersected with a suitable neighborhood of the zero section, and $\psi$ corresponds to the restriction of $\Phi=\vp_* | H$ 
to this neighborhood. We see that $\psi$ is of class $C^p$. 

\begin{thm}  
If, under the above assumptions, a $G_r$ invariant neighborhood $M\subset M_r$ of $X$ admits an adapted complex structure $P$ of class $C^{p-1}$, then the neighborhood $N=\psi(M\cap H_r)\subset N_r$ of $Y$ also admits an adapted complex structure of class $C^{p-1}$.
\end{thm}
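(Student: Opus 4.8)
The plan is to restrict $P$ to $H_r$, where it becomes an adapted CR structure, and then to descend that structure through the submersion $\psi$ to $N$ --- exactly the quotient construction of \cite{L2}. Throughout I would work in the model $M_r\hookrightarrow TX$, $x\mapsto\dot x(0)$, under which $T_xM_r$ is identified with the space of Jacobi fields along $x$ (as in \cite{LSz1}) and $\psi$ becomes $\Phi=\vp_*|H$. Write $m=\dim X$, $n=\dim Y$, so $P$ has rank $m$, $\bC TN$ has rank $2n$, and the fibers of $\psi$ have dimension $m-n$. Set $P^H:=P\cap\bC TH_r$. Since $H_r$ is a $C^p$ submanifold of the complex manifold $(M,P)$, the bundle $P^H$ is automatically involutive: two $C^1$ sections are simultaneously sections of $P$ and of $\bC TH_r$, hence so is their bracket. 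Moreover $P^H$ is adapted to $G_r$: reparametrising a horizontal geodesic keeps it horizontal, so $H_r$ is $G_r$ invariant, and for $x\in H_r$ the orbit map $g\mapsto xg$ maps $\mathrm{int}\,G_r$ into $H_r$ and is involutive into $(M,P)$; its differential therefore lands in $P\cap\bC TH_r=P^H$. Finally $P^H\cap\overline{P^H}=P\cap\oP\cap\bC TH_r=0$.

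Next comes the descent. Let $V=\Ker\psi_*\subset TH_r$ be the vertical bundle, whose leaves are the fibers of $\psi$; under the identifications these are the horizontal lifts of a fixed geodesic in $Y$, parametrised by the $\vp$ fiber over the base point. To push $P^H$ forward to a well defined involutive bundle $Q:=\psi_*P^H$ on $N$ I must verify three conditions, which are precisely the hypotheses of the quotient construction: (i) $P^H\cap\bC V=0$, so that $\psi_*$ is injective on $P^H$ and $Q$ has rank $n$; (ii) $P^H+\bC V$ is involutive, equivalently $[\bC V,P^H]\subset P^H+\bC V$, which forces the subspaces $\psi_*(P^H_x)$ to be independent of the point $x$ in a fiber and hence makes $Q$ a genuine subbundle; and (iii) $\bC TH_r=P^H\oplus\overline{P^H}\oplus\bC V$.

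Granting (i)--(iii), $Q$ is a rank $n$ subbundle of $\bC TN$ of class $C^{p-1}$. It is involutive, because two sections of $Q$ lift through the submersion $\psi$ to $\psi$ related sections of $P^H$, whose bracket lies in $P^H$ by (ii) and projects under $\psi_*$ to the bracket of the originals. It is a full complex structure, $Q\oplus\overline Q=\bC TN$: by (iii) the map $\psi_*$ kills $\bC V$, is injective on $P^H\oplus\overline{P^H}$, and is onto $\bC TN$, so $Q\oplus\overline Q=\psi_*(P^H\oplus\overline{P^H})=\bC TN$. Adaptedness of $Q$ is then automatic, since $\psi$ is $G_r$ equivariant, $\psi(xg)=(\psi x)g$: the orbit map $g\mapsto yg$ of any $y=\psi(x)$ factors as $\psi$ composed with the orbit map $g\mapsto xg$ into $H_r$, and being a composition of involutive maps it is involutive into $(N,Q)$. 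Thus $Q$ is the adapted complex structure of class $C^{p-1}$ promised by the theorem.

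The main obstacle is the verification of (i)--(iii), and this is where the Riemannian submersion enters. In Jacobi field terms $\bC V$ consists of the Jacobi fields $J$ tangent to $H_r$ with $\vp_*J\equiv0$, while $\bC TH_r$ consists of the Jacobi fields coming from variations through horizontal geodesics, and $\psi_*$ is $J\mapsto\vp_*J$, which by O'Neill's theory [O] is again a Jacobi field along $\vp\circ x$. Writing $\mathbb J$ for the operator defining $P$, condition (i) is equivalent to $V\cap\mathbb JV=0$, i.e. that $V$ is totally real for $\mathbb J$; (iii) is the complementary non-degeneracy that $V$ is transverse to the complex tangent $\mathrm{Re}(P^H\oplus\overline{P^H})$ and fills out its complement; and (ii) says the vertical foliation preserves $P^H$ modulo $V$. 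I would establish all three from the explicit description of $\mathbb J$ on Jacobi fields together with O'Neill's relations between Jacobi fields upstairs and downstairs, in particular that horizontal Jacobi fields project to Jacobi fields while the vertical ones, together with their $\mathbb J$ images, meet $\bC TH_r$ in the expected dimensions. I expect (i) to be the most direct, and (ii) together with the transversal non-degeneracy in (iii) to be the delicate points, since they encode how the curvature terms in O'Neill's equations interact with the operator $\mathbb J$ of the adapted structure.
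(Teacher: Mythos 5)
Your overall scheme---restrict $P$ to $H_r$ to get an adapted CR structure and push it down through $\psi$---is the same starting point as the paper's, and your conditions (i) and (iii) are in fact true. But note that even they are not free: the constant rank of $P^H=P\cap\bC TH_r$, which you assert in passing, is not automatic for a real submanifold of a complex manifold (the CR dimension can jump along a submanifold of codimension $>1$). The paper obtains it, and with it your (i) and (iii), not from Jacobi-field dimension counts but from symplectic geometry: $-\omega$ is a K\"ahler form for $P$ by \cite{LSz1}, Lemma 3.3 gives $\psi^*\sigma=\omega|H_r$, so $\Ker\,\psi_*$ is exactly the null space of $\omega|H_r$, whence $J\,\Ker\,\psi_*^\bR=T(M\cap H_r)^\perp$ by a rank count, and then $P\cap\bC T(M\cap H_r)=\{\xi-iJ\xi:\xi\in E\}$ with $E=T(M\cap H_r)\cap JT(M\cap H_r)$ of constant rank. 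This symplectic identity is precisely where the Riemannian-submersion hypothesis enters, and it is the efficient substitute for the O'Neill bookkeeping you sketch.

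The genuine gap is your step (ii), which is where the whole difficulty of the theorem sits and which you leave as a hope. Two concrete problems. First, even if $[\bC V,P^H]\subset P^H+\bC V$ held, it would give only \emph{local} constancy of $\psi_*P^H_x$ along fibers, hence constancy only on connected fibers; fibers of a Riemannian submersion need not be connected (take $\vp$ a Riemannian covering: then $V=0$, your (ii) is vacuous, $H_r=M_r$, and well-definedness of $Q$ is exactly the nontrivial uniqueness statement for adapted complex structures---no bracket condition can see it). Second, the tool you propose for proving (ii), pointwise O'Neill curvature relations combined with ``the explicit description of $\mathbb J$ on Jacobi fields,'' is inadequate in principle: the adapted structure at a geodesic $x$ is not determined by local data, it depends on the metric along all of $x$, so there is no pointwise formula for $\mathbb J$ on which such a computation could run. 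The paper bypasses (ii) entirely and proves well-definedness directly, by a variant of the uniqueness proof of \cite[Theorem 4.2]{LSz1}: the orbit maps $g\mapsto(\xi g)'$ are holomorphic on ${\rm int}\,G_r$ by \cite[Proposition 5.1]{LSz1}; writing $(\eta_\alpha g)'=\sum_\beta F_{\alpha\beta}(g)(\xi_\beta g)'$ with $F_{\alpha\beta}$ meromorphic, one checks that on the real segment $\Gamma\subset G_r$ of constant reparametrizations the coefficients are determined by the pushed-forward vectors $\psi_*\xi_\alpha,\psi_*\eta_\alpha$ alone, and uniqueness of meromorphic continuation then determines $F_{\alpha\beta}$ globally; membership $\xi_\alpha+i\eta_\alpha\in R_x$ is read off as $F_{\alpha\beta}(e)=i\delta_{\alpha\beta}$. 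This nonlocal argument works uniformly for all fibers, connected or not, and is the missing core of your proposal. (Your closing steps---involutivity and adaptedness of $Q$ once well-definedness is granted---are fine and correspond to the paper's Lemma 3.2.)
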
   

In \cite{A} Aguilar proved a special case, when $(Y,k)$ is a quotient of $(X,h)$ by the action of a Lie group of isometries, and our proof below is partly inspired by his. We start with a general result on pushing forward involutive and adapted structures.

\begin{lem}  
Let $M,N$ be manifolds of class $C^2$, $P\subset \bC TM$ and $Q\subset \bC TN$ subbundles of class $C^1$, and $\psi: M\to N$ a $C^2$ surjection such that $P=\psi_*^{-1}Q$.
\begin{itemize}
\item[(a)] If $P$ is involutive, then so is $Q$.
\item[(b)] Suppose in addition that for some $r\in(0,\infty)$ $G_r$ acts on the right on both $M$ and $N$, and $\psi$ is equivariant. If $P$ is adapted to $G_r$, then so is $Q$.
\end{itemize}
\end{lem}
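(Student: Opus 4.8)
The plan is to prove both parts by pushing vector fields, and infinitesimal orbit maps, forward along $\psi$, using surjectivity to reach every point of $N$ and the equality $P=\psi_*^{-1}Q$ to control where the pushed-forward objects land. Throughout I will use that $\psi$ is a submersion; this is the setting in which the lemma is applied, where $\psi$ is the restriction of the bundle map $\Phi=\vp_* | H$, and it is exactly what allows vector fields to be lifted locally. For part (a), fix $y\in N$ and, by surjectivity, some $x\in M$ with $\psi(x)=y$. Since $\psi$ is a $C^2$ submersion, near $x$ there are $C^2$ coordinates in which $\psi$ is a linear projection. Given $C^1$ sections $\eta_1,\eta_2$ of $Q$ near $y$, I would lift each to a $C^1$ vector field $\xi_j$ near $x$ that is $\psi$-related to $\eta_j$, i.e. $\psi_*\xi_j=\eta_j\circ\psi$: in the projection coordinates one takes the base components of $\eta_j$ and zero fibre components, and the $C^2$ change of coordinates keeps $\xi_j$ of class $C^1$. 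Because $\psi_*\xi_j=\eta_j\circ\psi$ is a section of $Q$, the hypothesis $P=\psi_*^{-1}Q$ forces $\xi_j\in P$. Invoking the naturality of the Lie bracket under $\psi$-related fields, $[\xi_1,\xi_2]$ is $\psi$-related to $[\eta_1,\eta_2]$; since $P$ is involutive, $[\xi_1,\xi_2]$ is a section of $P=\psi_*^{-1}Q$, so $\psi_*[\xi_1,\xi_2]=[\eta_1,\eta_2]\circ\psi$ is a section of $Q$, and evaluating at $x$ gives $[\eta_1,\eta_2]_y\in Q_y$. As $y$ is arbitrary, $Q$ is involutive.

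Part (b) is essentially formal once equivariance is used, and needs only the inclusion $\psi_* P\subset Q$ (one half of the hypothesis). Fix $y\in N$ and $g\in{\rm int}\,G_r$, and choose $x\in M$ with $\psi(x)=y$. Equivariance $\psi(xg)=\psi(x)g$ says precisely that the orbit map $\rho_y\colon g\mapsto yg$ factors as $\rho_y=\psi\circ\rho_x$, where $\rho_x\colon g\mapsto xg$. Differentiating, $(\rho_y)_*=\psi_*\circ(\rho_x)_*$. Since $P$ is adapted, $(\rho_x)_*$ carries $S$ into $P$, and since $\psi_* P\subset Q$ we obtain $(\rho_y)_* S=\psi_*(\rho_x)_* S\subset\psi_* P\subset Q$. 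This is exactly the assertion that $\rho_y$ is an involutive map from $({\rm int}\,G_r,S)$ to $(N,Q)$; by surjectivity it holds for every $y\in N$, so $Q$ is adapted.

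The main obstacle is the local lifting step in part (a): I must produce $\psi$-related lifts $\xi_j$ of class $C^1$ even though $\psi$ is only $C^2$ and the $\eta_j$ only $C^1$, and then confirm that the bracket-naturality identity survives at this low regularity. The submersion normal form supplies the lifts, the bookkeeping that products $C^1\cdot C^1$ and compositions $C^1\circ C^2$ remain $C^1$ keeps them of class $C^1$, and the standard verification of bracket naturality against $C^2$ test functions is valid because each field is differentiated at most once. By contrast, part (b) requires nothing beyond the chain rule and the single inclusion $\psi_* P\subset Q$, and so presents no real difficulty.
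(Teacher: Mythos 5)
Your proposal is correct; part (b) is verbatim the paper's argument (equivariance gives $\psi\circ\Omega_x=\Omega_{\psi(x)}$, differentiate, and use $\psi_*P\subset Q$), but your part (a) takes a genuinely different route. The paper dualizes: it considers the family $A$ of $\bC$-valued $C^1$ one-forms $\alpha$ on $N$ vanishing on $Q$, observes $P=\bigcap_{\alpha\in A}\Ker\,\psi^*\alpha$, and combines the dual characterization of involutivity with the functorial identity $d\,\psi^*\alpha=\psi^*d\alpha$ to conclude $(\psi^*d\alpha)|P=0$ and hence $d\alpha|Q=0$. That argument buys two things your route has to work for: forms pull back along any map, so there is no lifting step and no submersion normal form, and the regularity bookkeeping is trivial (one exterior derivative of a $C^1$ form, one pullback by a $C^2$ map). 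Your route instead lifts $C^1$ sections of $Q$ to $\psi$-related $C^1$ sections of $P$ via the constant-rank chart and invokes bracket naturality; this is sound at the stated regularity --- the rank theorem for a $C^2$ submersion yields $C^2$ charts, the lift stays $C^1$, and in the naturality computation the second derivatives of $\psi$ cancel by symmetry, so each field is differentiated only once --- but it is longer and makes the submersion hypothesis load-bearing. On that point you are in good company: although the lemma only says ``surjection,'' the paper's own final step (``i.e., $d\alpha|Q_{\psi(x)}=0$'') tacitly needs $\psi_*P_x\supseteq Q_{\psi(x)}$, which from $P=\psi_*^{-1}Q$ alone requires $Q_{\psi(x)}$ to lie in the image of $\psi_{*x}$; in the application this is guaranteed because $\psi$ is proved to be a submersion just before the lemma is used, so your explicitly flagged assumption is harmless and arguably more honest. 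In short: your (b) matches the paper; your (a) is a correct vector-field-lifting proof where the paper gives a slicker dual-forms proof that needs neither lifting nor (explicitly) submersivity.
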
  
Part (a) is the converse of \cite[Lemma 3.1]{L2}.

\begin{proof}
(a)\ Let $A$ be the collection of $\bC$ valued 1-forms $\alpha$ on $N$ of class $C^1$ that vanish on $Q$. Involutivity means that $d\alpha | Q$ also vanish for all $\alpha\in A$. Now $P=\bigcap_{\alpha\in A}\Ker\,\psi^*\alpha$ is known to be involutive, hence for $x\in M$
\[
0=(d \psi^*\alpha)| P_x=(\psi^* d\alpha) | P_x, 
\]
i.e., $d\alpha | Q_{\psi(x)}=0$, as needed.

(b) Write $\Omega_xg=xg$ for $x\in M$ or $N$ and $g\in G_r$. That $P$ is adapted means $\Omega_{x*}S\subset P$. Since $\psi\circ\Omega_x=\Omega_{\psi(x)}$, 
\[\Omega_{\psi(x)*}S=\psi_*\Omega_{x*}S\subset \psi_*P=Q,\]
which in turn means $Q$ is adapted.
\end{proof}

Given $p\in [1,\infty]$, consider next a $C^{p+1}$ manifold $X$ with a Riemannian metric $h$ of class $C^p$. On the tangent bundle $\pi: TX\to X$ the tautological 1-form and the symplectic 2-form are defined by
\[\alpha(\xi)=h(v,\pi_*\xi), \quad \xi\in T_vTX,\qquad\text{and}\quad \omega=d\alpha.\]
Let $(Y,k)$ be a similar Riemannian manifold with bundle projection $\varrho: TY\to Y$ and corresponding forms $\beta$ and $\sigma=d\beta$ on $TY$. If $\vp: X\to Y$ is a submersion of class $C^{p+1}$, $\Ker\, \vp_*$ is a $C^p$ subbundle of $TX$, whose orthogonal complement is the bundle $H\to X$ of horizontal vectors.

\begin{lem}   
If $\vp:X\to Y$ is a Riemannian submersion and $\Phi=\vp_*|H:H\to TY$, then $\alpha|H=\Phi^*\beta$ and $\omega|H=\Phi^*\sigma$.
\end{lem}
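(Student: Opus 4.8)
The plan is to prove the first identity $\alpha|H=\Phi^*\beta$ by a pointwise computation and then to obtain the second, for the $2$-forms, simply by differentiating. Indeed, if $\iota:H\hookrightarrow TX$ denotes the inclusion, then $\alpha|H=\iota^*\alpha$ and $\omega|H=\iota^*\omega=\iota^*d\alpha=d(\iota^*\alpha)$, while $\Phi^*\sigma=\Phi^*d\beta=d(\Phi^*\beta)$. Hence once $\iota^*\alpha=\Phi^*\beta$ is established, applying $d$ to both sides gives $\omega|H=\Phi^*\sigma$ at no further cost. So all the content sits in the $1$-form identity.

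To verify $\alpha|H=\Phi^*\beta$, I would fix a horizontal vector $v\in H$, write $x=\pi(v)$, take an arbitrary $\xi\in T_vH$, and evaluate both forms on $\xi$. On the left, by definition $\alpha(\xi)=h(v,\pi_*\xi)$. On the right, $(\Phi^*\beta)(\xi)=\beta(\Phi_*\xi)=k\bigl(\Phi(v),\varrho_*\Phi_*\xi\bigr)$, where $\Phi(v)=\vp_*v\in T_{\vp(x)}Y$. The first step is the base-point identity $\varrho\circ\Phi=\vp\circ\pi$ on $H$ (the vector $\Phi(v)=\vp_*v$ sits over $\vp(x)$), which yields $\varrho_*\Phi_*\xi=\vp_*\pi_*\xi$ and turns the right-hand side into $k\bigl(\vp_*v,\vp_*\pi_*\xi\bigr)$.

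The decisive step is then a single observation about how the vertical direction drops out on both sides. Decompose $\pi_*\xi=u+w$ with horizontal part $u\in H_x$ and vertical part $w\in\Ker\,\vp_*$. On the $Y$ side, $w$ is annihilated automatically, so $k(\vp_*v,\vp_*\pi_*\xi)=k(\vp_*v,\vp_*u)$; since $v$ and $u$ are both horizontal, the defining property of a Riemannian submersion (that $\vp_*$ restricts to an isometry of $H_x$ onto $T_{\vp(x)}Y$) converts this into $h(v,u)$. On the $X$ side, $w$ drops out for the independent reason that $v$ is horizontal, so $h(v,w)=0$ and $h(v,\pi_*\xi)=h(v,u)$. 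Both sides therefore equal $h(v,u)$, which is exactly the desired identity.

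The main, if modest, obstacle is keeping the two cancellations of $w$ conceptually distinct: on the target the vertical component is killed by $\vp_*$, whereas on the source it is killed by orthogonality to the horizontal vector $v$. It is precisely the hypothesis $v\in H$ that makes these two effects agree, and this is also why the identity is asserted only on $H$ and not on all of $TX$.
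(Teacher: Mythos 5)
Your proposal is correct and follows essentially the same route as the paper: both establish the $1$-form identity $\alpha|H=\Phi^*\beta$ pointwise via the base-point relation $\varrho\circ\Phi=\vp\circ\pi$, the horizontal--vertical decomposition of $\pi_*\xi$ (with the vertical part killed by orthogonality to $v$ on the $X$ side and by $\vp_*$ on the $Y$ side, exactly as you note), and the isometry property of $\vp_*$ on $H$, then deduce $\omega|H=\Phi^*\sigma$ by exteriorly differentiating, since pullback commutes with $d$. No gaps; your write-up if anything makes explicit the two distinct cancellations that the paper compresses into the preliminary identity $h(v,u)=k(\vp_*v,\vp_*u)$.
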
 

\begin{proof}
Any $u\in TX$ can be written $u=u_0+u_H$ with $u_0\in\Ker\,\pi_*$ and $u_H\in H$. With $v\in H$ we have
\[
h(v,u)=h(v,u_H)=k(\vp_*v, \vp_*u_H)=k(\vp_*v, \vp_*u).
\]
If $v\in H$ and $\xi\in T_vTX$, in light of $\varrho\circ\Phi=\vp\circ\pi$ therefore
\[\beta(\Phi_*\xi)=k(\Phi v,\varrho_*\Phi_*\xi)=k(\vp_*v, \vp_*\pi_*\xi)=h(v,\pi_*\xi)=\alpha(\xi).\]
Thus $\alpha|H=\Phi^*\beta$ and $d\alpha| H=\Phi^*d\beta$.
\end{proof}
\begin{proof}[Proof of Theorem 3.1] 
For a differentiable map $f:U\to V$ we have used, and will continue to use $f_*$ to denote its differential acting between 
$\bC TU$ and $\bC TV$. It will be convenient to denote the restriction $f_*| TU$ by $f_*^\bR$. 

The open embeddings (3.1) pull back the symplectic forms $\omega,\sigma$ to $M_r$ and $N_r$. We will keep calling the pull backs $\omega$ and $\sigma$. Since $H_r\subset M_r$ corresponds to $H\subset TX$, 
\begin{equation}  
\psi^*\sigma=\omega| H_r.
\end{equation}   
by Lemma 3.3. First we show that $\psi$ is a submersion and
\begin{equation}  
\Ker\, \psi_*^\bR=\{\xi\in T_xH_r: x\in H_r\,\text{ and }\, \omega(\xi,\eta)=0\,\text{ for all }\, \eta\in T_xH_r\}.
\end{equation}   

Indeed, given $x_0\in H_r$, construct through $x_0(0)\in X$ a submanifold $X^\perp$ of a neighborhood of $x_0(0)$ that 
$\vp$ maps diffeomorphically on a neighborhood of $\vp\big(x_0(0)\big)\in Y$. Any $y\in N_r$ in a suitable neighborhood of 
$\psi(x_0)$ has a unique lift to a horizontal geodesic $x$ in $X$ with $x(0)\in X^\perp$, and the $C^1$ map $y\mapsto x$ is a 
right inverse to $\psi$ in this neighborhood; hence $\psi$ is a submersion. Further, if $\xi,\eta\in T_xH_r$, by (3.2)
$\omega(\xi,\eta)=0$ is equivalent to $\sigma(\psi_*\xi,\psi_*\eta)=0$. Since $\psi_*T_xH_r=T_{\psi(x)}N_r$, (3.3) follows.

Next we recall that according to \cite[Corollary 5.5 and Theorem 5.6]{LSz1} $-\omega$ is a K\"ahler form on $M$ when $M$ is endowed with its adapted complex structure. Let $J: TM\to TM$ be the almost complex tensor of the adapted complex structure and let $\perp$ denote orthogonal complement in the Hermitian metric $\omega(J\xi,\eta)$ on $TM$. By (3.3)
\[J \Ker\, \psi_*^\bR\subset T(M\cap H_r)^\perp.\]
We compute
\begin{gather*}
\dim M_r=2\dim X, \quad\dim N_r=2\dim Y,\quad \dim H_r=\dim X+\dim Y,\\
\text{rk}\, T(M\cap H_r)^\perp=\dim X-\dim Y,\quad \text{rk}\,\Ker\, \psi^\bR_*=\dim H_r-\dim N_r=\dim X-\dim Y,
\end{gather*}
whence in fact $J \Ker\,\psi_*^\bR=T(M\cap H_r)^\perp$. This implies
\begin{equation}  
\begin{aligned}
E=(\Ker\,\psi_*^\bR\oplus J\Ker\,\psi_*^\bR)^\perp&=T(M\cap H_r)\cap J T (M\cap H_r)\quad \text{and} \\
\{\xi-iJ\xi:\xi\in E\}&= P\cap \bC T(M\cap H_r).
\end{aligned}
\end{equation}  
This latter bundle, that we denote $R$, is involutive since $P$ is, and endows $M\cap H_r$  with an involutive structure of 
class $C^{p-1}$ (in fact, a CR structure), that is clearly adapted to $G_r$. Furthermore, $\psi_*$ maps 
$E_x=T_xH_r\cap(\Ker\,\psi_*^\bR)^\perp$ 
isomorphically on $T_{\psi(x)}N$ for $x\in M\cap H_r$, and so
\begin{equation}   
\psi_*R_x\oplus \psi_*\overline{R}_x=\bC T_{\psi(x)} N.
\end{equation}  
We claim that $\psi_*R_x$  depends only on $\psi(x)$ or equivalently, whether $\xi-i\eta\in R_x$ for $\xi,\eta\in T_xH_r$ depends only on $\psi_*\xi$, $\psi_*\eta$. 

We will show this by a variant of the proof of \cite[Theorem 4.2]{LSz1}. The actions of $G_r$ on $M_r, N_r$ induce 
actions on $\bC TM_r, \bC TN_r$, that we denote $(\xi,g)\mapsto \xi g$. Various objects introduced in this proof are 
compatible with the $G_r$ actions: $\psi$ and $\psi_*$ are equivariant and $E\subset T(M\cap H_r)$ is invariant. As to this 
latter, note that $g(t)=a+t$ acts by the geodesic flow, hence preserves $\omega$, while $g(t)=bt$ acts by scaling on 
$M\subset TX$, hence changes $\omega$ to a multiple.
Therefore
\[
E=\{\eta\in T_x H_r: x\in M\cap H_r\,\text{ and }\, \omega(\xi,\eta)=0\,\text{ for all } \xi\in(\Ker\,\psi_*)_x\}
\]
is indeed invariant.

If $\xi\in TM$, write $\xi'=(\xi-iJ\xi)/2\in P$. Fix $x\in M\cap H_r$ and choose 
$\xi_1,\dots,\xi_m\in T_xM$ so that $\xi'_1,\dots,\xi'_m$, resp. $\xi'_1,\ldots,\xi'_n$ form a basis of $P_x$, resp. $R_x$ 
($m=\dim X=\text{rk}_\bC\,P$, $n=\dim Y=\text{rk}_\bC\, R$). In particular, $\xi_\alpha\in E_x$ for $\alpha\le n$ 
are independent, and form
a basis. Next choose $\eta_1,\dots,\eta_n\in E_x$.
By \cite[Proposition 5.1]{LSz1} the continuous maps
\[
G_r\ni g\mapsto (\xi_\alpha g)',\, (\eta_\alpha g)' \in P=T^{10}M
\]
are holomorphic over int$\, G_r$. Hence for $g\in \text{int}\,G_r$, except perhaps a discrete subset 
$\Delta\subset \text{int}\,G_r$, the vectors $(\xi_\alpha g)'$ are linearly independent. Therefore there are meromorphic 
functions $F_{\alpha\beta}$ on $\text{int}\, G_r$ such that
\begin{equation}   
(\eta_\alpha g)'=\sum^m_{\beta=1} F_{\alpha\beta}(g)(\xi_\beta g)', \qquad \alpha=1,\dots, n, \quad g\in \text{int}\, G_r.
\end{equation}  
Moreover, as the $(\xi_\beta g)'$ are independent at $g=e$ as well, $F_{\alpha\beta}$ have a limit at $e$, that we denote 
$F_{\alpha\beta}(e)$. Let $\Gamma\subset G_r$ consist of constant maps $t\mapsto a$, where $|a|<r$. Since $N_rg=Y$ 
when $g\in\Gamma$, 
\[\psi_*(\xi_\alpha g),\, \psi_*(\eta_\alpha g)\in T_{xg} Y, \qquad g\in\Gamma.\]
In addition, $\xi_{\alpha}g, \eta_\alpha g\in E_{xg}$, and $\xi_{\alpha}g$ for $\alpha=1,\dots, n$ are linearly independent 
when $g\in\Gamma\setminus\Delta$ (because the $(\xi_\alpha g)'$ are).  Therefore there are unique functions 
$f_{\alpha\beta}:\Gamma\setminus\Delta\to\bR$, $\alpha,\beta\le n$, such that
\begin{equation}   
\eta_\alpha g=\sum\limits^n_{\beta=1} f_{\alpha\beta}(g)\xi_{\beta} g, \quad\text{ or equivalently, }
\quad \psi_*\eta_\alpha g=\sum\limits^n_{\beta=1}f_{\alpha\beta}(g)\psi_*\xi_{\beta} g,
\end{equation}  
when $g\in\Gamma\setminus\Delta$. It follows that $(\eta_\alpha g)'=\sum_{\beta=1}^n f_{\alpha\beta}(g)(\xi_\beta g)'$, 
and by (3.6) that $F_{\alpha\beta}=f_{\alpha\beta}$ on $\Gamma\setminus\Delta$ for $\alpha,\beta\le n$. Thus the meromorphic 
functions $F_{\alpha\beta}$ for $\alpha,\beta\le n$ are uniquely determined by $f_{\alpha\beta}$ which, in view of (3.7), 
are determined by $\psi_*\xi_\alpha, \psi_*\eta_\alpha$. But $\xi_\alpha+i\eta_\alpha\in R_x$ if and only if 
$\eta'_\alpha=i\xi'_\alpha$ ($\alpha\le n)$. Since $\eta'_1,\dots,\eta'_n\in R_x$ and $\xi'_1,\ldots,\xi'_n$ is a basis,
(3.6) implies
$$
\eta'_\alpha=\sum_{\beta=1}^n F_{\alpha\beta}(e)\xi'_\beta,\qquad\alpha=1,\ldots,n.
$$
Hence $\xi_\alpha+i\eta_\alpha\in R_x$ for $\alpha\le n$ if and only if  
$F_{\alpha\beta}(e)=i\delta_{\alpha\beta}$ ($\alpha,\beta\le n$), and depends only on $\psi_*\xi_\alpha$, 
$\psi_*\eta_\alpha$, and their orbit under $G_r$.

Therefore $\psi_*R_x=Q_{\psi(x)}$ indeed depends only on $\psi(x)$ and defines a $C^{p-1}$ subbundle  $Q\subset\bC TN$, which, by (3.5) and by Lemma 3.2 is an adapted complex structure on $N$. 
\end{proof}

\section{Riemannian submersions and analyticity}

Finally we prove the following version of Theorem 1.1. 
\begin{thm}  
Let $p\in (3, \infty]$ be nonintegral, $Y$ a $C^{p+1}$ manifold endowed with a $C^p$ Riemannian metric $k$, and $(X,h)$ an analytic Riemannian manifold. If $\vp:X\to Y$ is a surjective Riemannian submersion of class $C^{p+1}$, then $Y$ admits an analytic structure in which $h$ is analytic. Moreover, this analytic structure and the original $C^{p+1}$ manifold structure of $Y$ share the same underlying $C^p$ manifold structure.
\end{thm}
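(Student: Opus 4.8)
The plan is to read off Theorem 4.1 as the composition of the three results already established, running an adapted complex structure up on $X$, across the submersion, and down to $Y$. Concretely: Theorem 2.4 produces an adapted complex structure on the geodesic manifold of $X$; Theorem 3.1 transports it to the geodesic manifold of $Y$; and Theorem 2.5 extracts from the latter the sought analytic structure on $Y$ itself.

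First I would fix $r\in(0,\infty)$ and manufacture a single adapted complex structure on a $G_r$-invariant neighborhood of $X\subset M_r$. Since $(X,h)$ is analytic, Theorem 2.4 gives, for each constant geodesic $x\equiv u$, a $G_r$-invariant neighborhood carrying an analytic adapted complex structure. These local structures must be glued into one. The gluing rests on the rigidity of adapted complex structures: near the constant geodesics the almost complex tensor is forced by the requirement that the $G_r$-orbit maps be holomorphic, so any two analytic adapted structures agree in a neighborhood of $X$ and hence, by analytic continuation, on the whole of any connected $G_r$-invariant domain on which both are defined. The pieces therefore patch to an analytic---in particular $C^{p-1}$---adapted complex structure $P$ on a $G_r$-invariant neighborhood $M\subset M_r$ of all of $X$.

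Next I would feed $M$ into Theorem 3.1. All hypotheses hold: $p\in(3,\infty]\subset[2,\infty]$, the map $\varphi$ is a surjective $C^{p+1}$ Riemannian submersion, and $P$ is an adapted complex structure of class $C^{p-1}$ on $M$. The theorem then yields an adapted complex structure $Q$ of class $C^{p-1}$ on $N=\psi(M\cap H_r)$; since $H_r$ is $G_r$-invariant and $\psi$ is equivariant, $N$ is a $G_r$-invariant neighborhood of $Y$ in $N_r$, the surjectivity of $\varphi$ ensuring that it contains all of $Y$. Finally I would apply Theorem 2.5 to $(Y,k)$ with this $N$ and $Q$: as $p>3$ is nonintegral, $k$ is of class $C^p$, and $Q$ is an adapted complex structure of class $C^{p-1}$ on a $G_r$-invariant neighborhood of $Y\subset N_r$, the theorem supplies an analytic structure on $Y$ in which $k$ is analytic and whose underlying $C^p$ manifold coincides with that of the original $C^{p+1}$ structure---precisely the assertion of Theorem 4.1. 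The only step that is not a direct citation is the globalization in the second paragraph, and this is where I expect the genuine care to be needed: the passage from the pointwise Theorem 2.4 to a single adapted structure over all of $X$ must lean on the uniqueness of adapted complex structures rather than on Theorem 2.4 alone.
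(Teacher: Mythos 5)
Your proposal is correct and follows exactly the paper's route: Theorem 2.4 produces the adapted complex structure on a neighborhood $M$ of $X\subset M_r$, Theorem 3.1 pushes it to a $C^{p-1}$ adapted structure on a neighborhood of $Y\subset N_r$, and Theorem 2.5 then yields the analytic structure on $Y$. The one point where you diverge is the globalization step you rightly flag as delicate: the paper sidesteps it entirely with the phrase ``possibly after shrinking $X$ and $Y$'' (the conclusion being local on $Y$, with patching left implicit), whereas you glue the local structures from Theorem 2.4 using the uniqueness of adapted complex structures --- a legitimate alternative, and your analytic-continuation argument is in fact sound because every $G_r$-orbit is connected and contains a constant geodesic, so every component of a $G_r$-invariant overlap meets $X$.
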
   

\begin{proof}
Fix $r\in(0,\infty)$. Let $M_r, N_r$ be the space of geodesics $[-r,r]\to X$, resp. $Y$, into which $X$ and $Y$ are embeded 
as the space of constant geodesics. Possibly after shrinking $X$ and $Y$, some neighborhood $M$ of $X\subset M_r$ 
will admit an analytic adapted complex structure $P$ (Theorem 2.4). By Theorem 3.1 $P$ induces an adapted complex 
structure $Q$ of class $C^{p-1}$ on a neighborhood $N\subset N_r$ of $Y$. The conclusion of the theorem now follows 
from Theorem 2.5.
\end{proof}

It is a natural question what happens if we weaken the assumption of Theorem 4.1 on $(X,h)$ to $C^\infty$ smoothness. Does it imply that $(Y,k)$ must be $C^\infty$? We do not know the answer.

\end{document}